\newtheorem{theorem}{Theorem}
\newcommand{\Tr}{{\rm tr}\,}
\newcommand{\bR}{\mathbb{R}}
\newcommand{\bN}{\mathbb{N}}
\newcommand{\bF}{\mathbb{F}}
\begin{document}

\baselineskip 7mm

\title{When powers of a matrix coincide with its Hadamard powers} 

\author{Roman Drnov\v sek}


\begin{abstract}
We characterize  matrices whose powers coincide with their Hadamard powers.
\end {abstract}

\maketitle

\noindent
{\it Key words}: Hadamard product, canonical forms, $(0,1)$-matrices, idempotents \\
{\it Math. Subj. Classification (2010)}: 15A21 \\

Let $M_n(\bF)$ be the algebra of all $n \times n$ matrices over the field $\bF$. 
The {\it Hadamard product} of matrices $A = [a_{i j}]_{i, j = 1}^n \in M_n(\bF)$ and $B = [b_{i j}]_{i, j = 1}^n \in M_n(\bF)$
is the matrix $A \circ B = [a_{i j} b_{i j}]_{i, j = 1}^n$. The usual product of $A$ and $B$ is denoted by $A B$.
Given a positive integer $r$, the {\it $r$-th Hadamard power} of a matrix 
$A = [a_{i j}]_{i, j = 1}^n \in M_n(\bF)$ is the matrix $A^{(r)} = [a_{i j}^r]_{i, j = 1}^n$, while the usual $r$-th power of $A$ is denoted by $A^{r}$. 

Let  $p (\lambda) = c_m \lambda^m+ c_{m-1} \lambda^{m-1} + \cdots + c_1 \lambda$ be a polynomial 
with given coefficients $c_m, c_{m-1}, \ldots, c_1 \in \bF$ and without constant term. For any $A \in M_n(\bF)$, 
we can first define the usual matrix function by 
$$ p (A) = c_m A^m+ c_{m-1} A^{m-1} + \cdots + c_1 A , $$
and then also the Hadamard matrix function by
$$ p^H (A) = c_m A^{(m)}+ c_{m-1} A^{(m-1)} + \cdots + c_1 A . $$
The Hadamard product and Hadamard matrix functions arise naturally in a variety of ways (see e.g. \cite[Section 6.3]{HJ}).  
So, it is perhaps useful to know for which matrices $A$ we have $p (A) =  p^H (A)$ for all such polynomials $p$, or equivalently, 
$A^r = A^{(r)}$ for every $r \in \bN$.  The latter question has been recently posed in \cite{Om} for the case of real matrices, 
and two characterizations of such matrices have been given in \cite{He} and \cite{Ku}. 
In this note we give another description of such matrices. 

\begin{theorem}
\label{projections} 
Let $A \in M_n(\bF)$ be a nonzero matrix. Then the following assertions are equivalent:

(a) $A^r = A^{(r)}$ for every positive integer $r$; 

(b) $A^r = A^{(r)}$ for every integer $r \in \{2, 3, \ldots, n+1\}$;

(c) There exist $k \in \bN$, distinct non-zero elements $\lambda_1$, $\ldots$, $\lambda_k \in \bF$, 
and idempotent $(0,1)$-matrices $E_1$, $\ldots$, $E_k$ such that 
$$ A = \sum_{i=1}^k \lambda_i E_i \ \ \ \textrm{and} $$
$$ E_i \circ E_j = E_i E_j = 0 \ \ \  \textrm{for all} \ \ i \neq j . $$
\end{theorem}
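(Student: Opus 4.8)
The plan is to prove the cycle $(c) \Rightarrow (a) \Rightarrow (b) \Rightarrow (a)$ together with $(a) \Rightarrow (c)$, so that all three statements become equivalent. The implication $(a) \Rightarrow (b)$ is immediate, since (b) only asks for finitely many of the equalities in (a). For $(c) \Rightarrow (a)$ I would argue by direct computation: if $A = \sum_i \lambda_i E_i$ with $E_i E_j = 0$ for $i \neq j$ and $E_i^2 = E_i$, then in the expansion of $A^r$ every product involving two distinct indices vanishes, leaving $A^r = \sum_i \lambda_i^r E_i$. On the other hand, because the $E_i$ are $(0,1)$-matrices with pairwise disjoint supports (this is precisely $E_i \circ E_j = 0$), each entry of $A$ is either $0$ or equals a unique $\lambda_i$, so raising entries to the $r$-th power gives $A^{(r)} = \sum_i \lambda_i^r E_i$ as well; hence $A^r = A^{(r)}$.

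The crucial point is $(b) \Rightarrow (a)$, which also explains the bound $n+1$. Fix a position $(i,j)$ and set $u_r = (A^r)_{ij}$. By the Cayley--Hamilton theorem, multiplying $\chi_A(A) = 0$ by $A^r$ shows that the scalar sequence $(u_r)_{r \ge 0}$ satisfies the linear recurrence whose characteristic polynomial is $\chi_A$, of degree $n$. The geometric sequence $v_r = a_{ij}^r$ satisfies the recurrence with characteristic polynomial $\lambda - a_{ij}$. Consequently the difference $w_r = u_r - v_r$ is annihilated by the operator $\chi_A(S)(S - a_{ij})$, where $S$ denotes the shift; this is a monic recurrence of order $n+1$. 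Now (b) gives $w_r = 0$ for $r = 2, \ldots, n+1$, while $w_1 = 0$ holds trivially, so $w$ vanishes on $n+1$ consecutive indices; propagating the order-$(n+1)$ recurrence forward forces $w_r = 0$ for every $r \ge 1$. Thus $(A^r)_{ij} = a_{ij}^r$ for all $r$ and all $(i,j)$, which is (a).

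For $(a) \Rightarrow (c)$ I would construct the idempotents directly. Let $\lambda_1, \ldots, \lambda_k$ be the distinct nonzero values occurring as entries of $A$, and let $E_i$ be the $(0,1)$-matrix marking the positions where $A$ equals $\lambda_i$. By construction $A = \sum_i \lambda_i E_i$, the $E_i$ have disjoint supports, and $A^{(r)} = \sum_i \lambda_i^r E_i$; combined with (a) this yields $A^r = \sum_i \lambda_i^r E_i$ for all $r \ge 1$. Hence for every polynomial $f$ with $f(0) = 0$ one obtains $f(A) = \sum_i f(\lambda_i) E_i$. Since the $\lambda_i$ are distinct and nonzero, I can interpolate, at the $k+1$ distinct nodes $0, \lambda_1, \ldots, \lambda_k$, a polynomial $f_j$ with $f_j(0) = 0$ and $f_j(\lambda_i) = \delta_{ij}$, so that $E_j = f_j(A)$. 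Then $E_i E_j = (f_i f_j)(A) = \sum_l f_i(\lambda_l) f_j(\lambda_l) E_l = \delta_{ij} E_i$, which is exactly $E_i^2 = E_i$ together with $E_i E_j = 0$ for $i \neq j$; the relations $E_i \circ E_j = 0$ hold by disjointness of supports. As $A$ is nonzero we have $k \ge 1$, completing (c).

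The main obstacle is the passage $(b) \Rightarrow (a)$: one must recognize that each entry sequence $(A^r)_{ij}$ obeys a Cayley--Hamilton recurrence, that subtracting the geometric sequence $a_{ij}^r$ raises the order by exactly one, and that matching $n+1$ consecutive values (of which the $r=1$ value is automatic) is therefore precisely enough to force equality for all $r$. Everything else is either a direct computation or the clean functional-calculus argument in $(a) \Rightarrow (c)$; I should double-check that every interpolation step is valid over an arbitrary field $\bF$, which it is, since it uses only distinctness of the finitely many interpolation nodes.
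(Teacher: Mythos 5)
Your proposal is correct, but it routes the equivalences differently from the paper. The paper proves $(b) \Rightarrow (c)$ directly: from the identity $p(A) = [p(a_{ij})]_{i,j}$, valid under (b) for every polynomial $p$ of degree at most $n+1$ with $p(0)=0$, it compares the minimal polynomial $m(\lambda)$ of $A$ (times $\lambda$ if $A$ is invertible) with the minimal polynomial $q(\lambda)$ annihilating $0$ and all entries of $A$, concluding that each divides the other's candidate; since $q$ has distinct roots, $A$ is diagonalizable and its nonzero eigenvalues coincide with its nonzero entries, after which the Lagrange idempotents $E_i = p_i(A)$ are shown to be $(0,1)$-matrices with the required properties, and $(c) \Rightarrow (a)$ is the same computation you give. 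You instead prove $(b) \Rightarrow (a)$ first, by a self-contained Cayley--Hamilton argument: the sequence $w_r = (A^r)_{ij} - a_{ij}^r$ satisfies the monic order-$(n+1)$ recurrence with characteristic polynomial $\chi_A(\lambda)(\lambda - a_{ij})$, valid for all $r \ge 0$, and vanishes at the $n+1$ consecutive indices $r = 1, \ldots, n+1$ (the case $r=1$ being automatic), so forward propagation kills it entirely; then you prove $(a) \Rightarrow (c)$ by taking $E_i$ to be the support indicators of the distinct nonzero entries and exhibiting each as $f_i(A)$ via interpolation at the nodes $0, \lambda_1, \ldots, \lambda_k$, which yields idempotency and mutual annihilation at once. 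Both routes are sound over an arbitrary field and both ultimately rest on Lagrange interpolation with $0$ forced as a node; the trade-off is that your recurrence step isolates cleanly why exactly the powers $2, \ldots, n+1$ suffice (an order-$n$ recurrence plus one extra factor), while the paper's single divisibility argument is shorter and extracts more structure (diagonalizability, and the coincidence of nonzero eigenvalues with nonzero entries) directly from hypothesis (b) rather than leaving it implicit in the identity $E_i = f_i(A)$.
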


\begin{proof}
The implication $(a) \Rightarrow (b)$ is trivial. We begin the proof of the implication $(b) \Rightarrow (c)$
by letting $p (\lambda) = c_m \lambda^m+ c_{m-1} \lambda^{m-1} + \cdots + c_1 \lambda$ be a polynomial of degree $m \le n+1$. If 
$A = (a_{i j})_{i, j = 1}^n$, then our assumptions give that 
\begin{equation}
p(A) = c_m A^m+ c_{m-1} A^{m-1} + \cdots + c_1 A = c_m A^{(m)}+ c_{m-1} A^{(m-1)} + \cdots + c_1 A = [p(a_{i j})]_{i, j = 1}^n \ . 
\label{equal}
\end{equation}
This implies that 
\begin{equation}
p(A) = 0  \iff  p(a_{i j}) = 0 \ \ \ \text{for all} \ \   i, j \ .
\label{zero}
\end{equation}
Let $m(\lambda)$ be the minimal polynomial of $A$. If $A$ is invertible, put $p(\lambda) = \lambda \, m(\lambda)$,
otherwise let $p(\lambda) = m(\lambda)$. Then the degree of $p(\lambda)$ is at most $n+1$ and $p(0) = 0$, so that 
the equivalence (\ref{zero}) implies that  $p(a_{i j}) = 0$ for all $i, j$.
Let $q(\lambda)$ be the minimal polynomial annihilating the element $0$ and all entries of $A$.
Then the polynomial $q(\lambda)$ divides the polynomial $p(\lambda)$, so that its degree is at most $n+1$.
Therefore, the equivalence (\ref{zero}) gives that $q(A) = 0$, and so $m(\lambda)$ divides $q(\lambda)$,
as $m(\lambda)$ is the minimal polynomial of $A$. 
This means that $m(\lambda)$ factors into distinct linear factors, 
so that the matrix $A$ is diagonalizable over $\bF$ and the set $\{\lambda_1, \ldots, \lambda_k \}$ of all non-zero eigenvalues of $A$ coincides with the set of all non-zero entries of $A$.

Now, for each $i =1, \ldots, k$, let $p_i(\lambda)$ be the Lagrange interpolation polynomial such that $p_i(\lambda_i) = 1$,
$p_i(0) = 0$, and $p_i(\lambda_j)= 0$ for all $j \neq i$, that is,
$$ p_i(\lambda) = \frac{\lambda (\lambda - \lambda_1) \cdots (\lambda - \lambda_{i-1}) (\lambda - \lambda_{i+1}) \cdots
(\lambda - \lambda_k)}{\lambda_i (\lambda_i - \lambda_1) \cdots (\lambda_i - \lambda_{i-1}) (\lambda_i - \lambda_{i+1}) \cdots
(\lambda_i - \lambda_k)}  \ . $$
Then $E_i = p_i(A)$ is an idempotent. Furthermore, 
$E_i E_j = 0$ for all $i \neq j$, as $p_i(\lambda) p_j(\lambda) = 0$ on the spectrum of the diagonalizable matrix $A$.
Since each entry of $A$ belongs to the set $\{0, \lambda_1, \ldots, \lambda_k \}$, it follows from (\ref{equal}) that 
$E_1$, $\ldots$, $E_k$ are $(0,1)$-matrices satisfying $E_i \circ E_j = 0$ for all $i \neq j$.
Finally, since $\lambda = \sum_{i=1}^k \lambda_i p_i(\lambda)$ on the spectrum of the diagonalizable matrix $A$, we conclude that 
$A = \sum_{i=1}^k \lambda_i p_i(A) = \sum_{i=1}^k \lambda_i E_i$. This proves the implication $(b) \Rightarrow (c)$.

For the proof of the implication $(c) \Rightarrow (a)$, we just compute the powers:
$$ A^r = \sum_{i=1}^k \lambda_i^r E_i = A^{(r)} . $$
\end{proof}

We now give the canonical form of an idempotent $(0,1)$-matrix. 
When the field $\bF$ is the field $\bR$ of all real numbers, this can be obtained easily from the canonical form 
of a nonnegative idempotent matrix (see e.g. \cite[Theorem 3.1 on page 65]{BP}). 

\begin{theorem}
\label{form} 
Let $E \in M_n(\bF)$ be an idempotent $(0,1)$-matrix of rank $m \in \bN$. 
Suppose that the characteristic of the field $\bF$ is either zero or larger than $n$. 
Then there exists a permutation matrix $P$ such that 
$$ P E P^T = \left[ \begin{matrix}
I & U  & 0 & 0 \cr
0 & 0  & 0 & 0 \cr
V & VU & 0 & 0 \cr
0 & 0  & 0 & 0 
\end{matrix}  \right] = 
\left[ \begin{matrix}
I \cr
0 \cr
V \cr
0  
\end{matrix}  \right] \cdot 
\left[ \begin{matrix}
I & U  & 0 & 0 
\end{matrix}  \right] , $$
where $I$ is the identity matrix of size $m$, and $U$, $V$ are $(0,1)$-matrices such that
$U$ has no zero columns, $V$ has no zero rows, and $V U$ is also  a $(0,1)$-matrix.
(It is possible that $U$ or $V$ act on zero-dimensional spaces.)
\end{theorem}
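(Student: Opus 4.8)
The plan is to read $E$ as the adjacency matrix of a directed graph on the vertex set $\{1, \ldots, n\}$, writing $a \to b$ when $E_{ab} = 1$, and to translate the idempotency $E^2 = E$ into a purely combinatorial statement. Since each product $E_{al}E_{lb}$ equals $1$ exactly when $a \to l \to b$, the $(a,b)$ entry of $E^2$ equals $N_{ab}\cdot 1_{\bF}$, where $N_{ab}$ is the number of vertices $l$ with $a \to l \to b$. Because $0 \le N_{ab} \le n$ and the characteristic is $0$ or exceeds $n$, the relation $(E^2)_{ab} = E_{ab} \in \{0,1\}$ forces the integer identity $N_{ab} = E_{ab}$ for all $a, b$. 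Thus the graph is transitive (an edge $a \to b$ is present whenever some $a \to l \to b$ exists) and, moreover, every present edge $a \to b$ has exactly one intermediate vertex $l$. This is the only place the characteristic hypothesis enters.

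First I would record two easy consequences. A $2$-cycle $a \to b \to a$ with $a \neq b$ would, by transitivity, produce loops at both $a$ and $b$, and then $a$ and $b$ would be two distinct intermediates of the edge $a \to b$, contradicting $N_{ab} = 1$; hence there are no $2$-cycles. Letting $D = \{i : E_{ii} = 1\}$ be the set of loops, the same counting shows $E[D,D] = I$, since an edge $i \to j$ between distinct loops would again have both $i$ and $j$ as intermediates, forcing $N_{ij} \ge 2$.

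The heart of the argument is the claim that every non-loop vertex $i$ (those with $E_{ii} = 0$) has either no incoming or no outgoing edge; this is exactly what partitions the index set into the four desired blocks, and I expect it to be the main obstacle. I would argue by contradiction: given $k \to i \to j$ with $E_{ii} = 0$, the edge $k \to i$ has a unique intermediate $l$, necessarily distinct from $i$ (a choice $l = i$ would require the absent loop $i \to i$), so $k \to l \to i$. Transitivity then yields $l \to j$ and $k \to j$, and now both $i$ and $l$ are distinct intermediates of $k \to j$, contradicting $N_{kj} = 1$.

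With this claim in hand, I would order the indices as $D$, then the vertices with zero row and nonzero column, then those with nonzero row and zero column, then those with both zero; the claim guarantees these four sets exhaust $\{1,\ldots,n\}$. The resulting $P E P^T$ then automatically has zero second and fourth block-rows and zero third and fourth block-columns, with upper-left block $E[D,D] = I$, so it takes exactly the form displayed in the statement. Finally, block-multiplying $E^2 = E$ yields that the $(3,2)$ block equals $V U$, and the defining nonvanishing of the second block-columns and third block-rows, combined with this identity, forces $U$ to have no zero column and $V$ no zero row; the entries of $V U$ lie in $\{0,1\}$ because it is a submatrix of $E$. A rank count on the displayed matrix, whose row space is spanned by the $|D|$ independent rows forming $[\,I\ U\ 0\ 0\,]$, gives $|D| = m$, so the identity block has the stated size $m$.
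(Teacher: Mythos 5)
Your proof is correct, and it reaches the stated block form by a genuinely different route from the paper. The paper's key lemma is matrix-algebraic: it first proves that an idempotent $(0,1)$-matrix with no zero rows and no zero columns must equal the identity, via a trace argument (permute so the top-left block has unit diagonal and the bottom-right block has zero diagonal, use the characteristic hypothesis to conclude the top-left block squares to a $(0,1)$-matrix and hence is $I$, then force the off-diagonal blocks to vanish and derive a contradiction from $D^2=D$ with ${\rm tr}\, D=0$); it then partitions the indices by the row/column vanishing pattern exactly as you do, and reduces the $(1,1)$ block $T$ to that special case via the block equations $T^2=T$, $TU=U$, $VT=V$. You instead invoke the characteristic hypothesis exactly once, to upgrade $E^2=E$ to the integer identity $N_{ab}=E_{ab}$ counting two-step walks, and then everything is combinatorial: $E[D,D]=I$ and, crucially, your claim that a non-loop vertex cannot have both an incoming and an outgoing edge (your two distinct intermediates $i$ and $l$ for the edge $k\to j$) serve as the graph-theoretic replacement for the paper's special-case lemma, and they show directly that the first group of indices is the set of loops. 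What each buys: your version localizes the characteristic assumption in a single clean step and avoids trace considerations entirely, making the structural content transparent; the paper's version produces the special case ($E$ with no zero rows/columns implies $E=I$) as a statement of independent interest and stays within standard block-matrix manipulation. The final assembly—the four groups, the identity $W=VU$, the non-vanishing of columns of $U$ and rows of $V$, and the rank count giving the size $m$ of the identity block—is essentially identical in both arguments (the paper leaves the rank count implicit in the factorization, which you spell out).
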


\begin{proof}
Suppose first that $E$ has no zero rows and no zero columns. We must show that $m = n$ and $E=I$.
Assume on the contrary that $m < n$. Since $\Tr (E) = m$ and $E$ is a $(0,1)$-matrix, there exists a permutation matrix $P$ such that 
$$ P E P^{\rm T} = \left[ \begin{matrix}
A & B \cr
C & D
\end{matrix}  \right] , $$
where the diagonal entries of $A \in M_m(\bF)$ are equal to $1$, while the diagonal entries of $D \in M_{n-m}(\bF)$ are equal to $0$.
Since $E$ is an idempotent, we have $A^2 + B C = A$, so that, in view of our characteristic assumption, $A^2$ is also a $(0,1)$-matrix. 
It follows that $A$ must be the identity matrix. As $P E P^{\rm T}$ is an idempotent, we obtain that 
$BC=0$, $BD=0$, $DC=0$ and $CB + D^2 = D$. 
Since $E$ has no zero rows, the equalities $BC=0$ and $BD=0$ imply that $B=0$. 
Since $D^2 = D$ and $\Tr (D) = 0$, we conclude that $D=0$. This contradicts the fact that $E$ has no zero columns. So, we must have that $m = n$ and $E=I$.

To prove the general case, let us group the indices $i=1, 2, \ldots, n$ into four sets according to whether 
the $i$-th row and the $i$-th column of $E$ are both non-zero, or the $i$-th row is zero but the $i$-th column is not, and so on. 
So, there exists a permutation matrix $P$ such that 
$$ P E P^{\rm T} = \left[ \begin{matrix}
T & U & 0 & 0 \cr
0 & 0 & 0 & 0 \cr
V & W & 0 & 0 \cr
0 & 0 & 0 & 0 
\end{matrix}  \right] , $$
where $T$, $U$, $V$, $W$ are $(0,1)$-matrices such that $T$ and $U$ have no zero rows in common, 
and $T$ and $V$ have no zero columns in common.
Since $E^2 = E$, we have $T^2 = T$, $T U = U$, $V T = V$ and $V U = W$. 
It follows from  $W = V U$ that $U$ has no zero columns and $V$ has no zero rows.
Indeed, if $U$ had a zero column, then the whole column in  $P E P^{\rm T}$ would be zero, cotradicting the definition of the second group of indeces.
As $T$ and $U = T U$ have no zero rows in common, $T$ has no zero rows. 
Similarly, $T$ cannot have a zero column. By the first part of the proof, we obtain that $T=I$ which gives the desired form.
\end{proof}

In Theorem \ref{form} we cannot omit the assumption on the characteristic of the field $\bF$. Namely, if the field $\bF$ has prime characteristic 
$p < n$, then, for example, take the $(p+1) \times (p+1)$ matrix of all ones and enlarge it by adding zeros to get 
an idempotent $(0,1)$-matrix in $M_n(\bF)$ which is not of the above form.

If we apply Theorem \ref{form} for idempotent $(0,1)$-matrices in the assertion (c) of Theorem \ref{projections}, we obtain the following 
decription of a matrix whose powers coincide with its Hadamard powers.

\begin{theorem}
\label{real} 
Let $A \in M_n(\bF)$ be a non-zero matrix of rank $m$, where the characteristic of the field $\bF$ is either zero or larger than $n$.
Then the assertions (a), (b) and (c) of Theorem \ref{projections} are further equivalent to the following:

(d) There exist a permutation matrix $P$, non-zero elements $\mu_1$, $\ldots$, $\mu_m \in \bF$, and $(0,1)$-matrices  $U$, $V$ 
such that $U$ has no zero columns, $V$ has no zero rows,  $V U$ is also  a $(0,1)$-matrix, and
$$  P A P^{\rm T} = \sum_{i=1}^m \mu_i v_i u_i^{\rm T} , $$
where $u_1^{\rm T}$,  $\ldots$, $u_m^{\rm T}$ are the rows of the $m \times n$ matrix 
$\left[ \begin{matrix}
I & U  & 0 & 0 
\end{matrix}  \right]$, and $v_1$,  $\ldots$, $v_m$ are the columns of the  $n \times m$ matrix 
$ \left[ \begin{matrix}
I  & 0  & V  &  0  
\end{matrix}  \right]^{\rm T} $. 

\end{theorem}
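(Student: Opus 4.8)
The plan is to enlarge the equivalence class of Theorem~\ref{projections} by the single assertion (d); since (a), (b), (c) are already mutually equivalent, it suffices to prove the two implications $(c)\Rightarrow(d)$ and $(d)\Rightarrow(a)$. The guiding idea for $(c)\Rightarrow(d)$ is that the several idempotents $E_1,\dots,E_k$ can be fused into one idempotent $(0,1)$-matrix, to which Theorem~\ref{form} then supplies a single permutation valid for all of them simultaneously. Concretely, set $E=\sum_{i=1}^k E_i$. The relations $E_iE_j=0$ for $i\neq j$ together with $E_i^2=E_i$ give $E^2=E$, while $E_i\circ E_j=0$ for $i\neq j$ says the supports of the $E_i$ are pairwise disjoint, so each entry of $E$ is a sum of at most one nonzero $(0,1)$-entry; hence $E$ is an idempotent $(0,1)$-matrix, of rank $\sum_i \mathrm{rank}\,E_i=m$.

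Applying Theorem~\ref{form} to $E$ yields a permutation $P$ and $(0,1)$-matrices $U,V$ (with $U$ having no zero column, $V$ no zero row, and $VU$ a $(0,1)$-matrix) putting $PEP^{\rm T}$ into the displayed block form with an identity block of size $m$. Now $A$ and $E$ have exactly the same support, and on the support of $E_i$ one has $A=\lambda_i E$; hence $PAP^{\rm T}$ shares the zero pattern of $PEP^{\rm T}$. Because the support of the leading $I$-block is the diagonal, the $(1,1)$-block of $PAP^{\rm T}$ is a diagonal matrix $D=\mathrm{diag}(\mu_1,\dots,\mu_m)$ with nonzero entries $\mu_p=(PAP^{\rm T})_{pp}$. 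I would then identify the remaining blocks as $DU$, $VD$ and $VDU$. Each off-diagonal $1$ of $E$ is owned by a unique $E_l$, and this owner must coincide with the owner of the corresponding diagonal slot: if a $1$ in the $(1,2)$-block sits at $(p,q)$ while $(E_i)_{pp}=1$, then an owner $E_l$ with $l\neq i$ would force $(E_iE_l)_{pq}$ to count the index $t=p$, contradicting $E_iE_l=0$ (the characteristic hypothesis is used here, since a sum of at most $n$ zero-one field elements vanishes only when every term does). The analogous computation using idempotency of a single $E_{i_0}$ handles the $(3,2)$-block. Matching owners shows the scalar attached to each nonzero entry is the $\mu$ of the relevant diagonal slot, which is exactly the asserted identity $PAP^{\rm T}=\sum_{i=1}^m\mu_i v_iu_i^{\rm T}$ with $v_i,u_i$ as in the statement.

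For $(d)\Rightarrow(a)$ I would compute directly with $B=PAP^{\rm T}=w\,D\,x^{\rm T}$, where $w$ is the $n\times m$ factor $\left[I\;0\;V\;0\right]^{\rm T}$ and $x^{\rm T}$ is the $m\times n$ factor $\left[I\;U\;0\;0\right]$. The key identity is $x^{\rm T}w=I_m$, which collapses the telescoping product to $B^r=w\,D^r\,x^{\rm T}$, i.e.\ the matrix with blocks $D^r,\ D^rU,\ VD^r,\ VD^rU$. Comparing with the Hadamard power $B^{(r)}$ block by block: the diagonal block gives $D^r$; the blocks $DU$ and $VD$ raise entrywise to $D^rU$ and $VD^r$ since $U,V$ are $(0,1)$; and the decisive block $VDU$ raises to $VD^rU$ precisely because $VU$ is a $(0,1)$-matrix, so each entry of $VDU$ is a single $\mu_t$ rather than a genuine sum and its $r$-th power is $\mu_t^r$. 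Hence $B^r=B^{(r)}$, and since conjugation by the permutation $P$ commutes with both ordinary and Hadamard powers, $A^r=A^{(r)}$ for every $r$, which is (a).

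The main obstacle is the very first step: a priori each $E_i$ demands its own permutation to reach canonical form, and these need not be compatible. Observing that $E=\sum_i E_i$ is itself a single idempotent $(0,1)$-matrix dissolves this difficulty at once, and it is exactly here that both defining relations $E_iE_j=0$ and $E_i\circ E_j=0$ are consumed. The remaining verification that the block scalings organize into $D$, $DU$, $VD$, $VDU$ is routine bookkeeping, but it leans essentially on the characteristic assumption through the repeated fact that a sum of at most $n$ zero-one field elements can vanish only if all of them are zero.
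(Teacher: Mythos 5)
Your proposal is correct, and its skeleton coincides with the paper's: both pass from (c) to (d) by merging the idempotents into the single idempotent $(0,1)$-matrix $E=E_1+\cdots+E_k$ (this is exactly where $E_iE_j=0$ and $E_i\circ E_j=0$ are both used) and then applying Theorem \ref{form} to $E$, so that one permutation serves all the $E_j$ simultaneously. The difference lies in the bookkeeping that attaches the scalars $\lambda_j$ to the rank-one pieces $v_iu_i^{\rm T}$. The paper does this with the block-matrix identities $E_j=E_jE=EE_j$, which yield $U_j=I_jU$ and $V_j=VI_j$ and hence exhibit $PE_jP^{\rm T}$ directly as the sub-sum $\sum_{i\,:\,(I_j)_{ii}=1}v_iu_i^{\rm T}$; no appeal to the characteristic is needed in this step (it is consumed only inside Theorem \ref{form}). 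You instead run an entrywise ``ownership'' argument, repeatedly invoking the characteristic hypothesis to prevent sums of at most $n$ ones from collapsing to $0$ or $1$; this is correct (including the slightly terse treatment of the $VU$-block, which does go through via idempotency of the owning $E_{i_0}$), but it is more laborious and spends the characteristic assumption where the paper's algebraic identities get the conclusion for free. Finally, you prove $(d)\Rightarrow(a)$ explicitly, via $x^{\rm T}w=I_m$, hence $B^r=wD^rx^{\rm T}$, followed by a block-by-block comparison with $B^{(r)}$ in which the hypothesis that $VU$ is a $(0,1)$-matrix is indeed the decisive point; the paper omits this direction entirely as a routine computation, so your write-up closes a loop that the paper leaves to the reader.
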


\begin{proof}
We must explain only how to obtain the assertion (d) from the assertion (c) of Theorem \ref{projections}. 
We first observe that the matrix $E = E_1 + \cdots + E_k$ is an idempotent $(0,1)$-matrix of rank $m$. 
By Theorem \ref{form}, there is a permutation matrix $P$ such that 
$$ P E P^{\rm T} = \left[ \begin{matrix}
I & U  & 0 & 0 \cr
0 & 0  & 0 & 0 \cr
V & VU & 0 & 0 \cr
0 & 0  & 0 & 0 
\end{matrix}  \right] = 
\left[ \begin{matrix}
I \cr
0 \cr
V \cr
0  
\end{matrix}  \right] \cdot 
\left[ \begin{matrix}
I & U  & 0 & 0 
\end{matrix}  \right] , $$
where $I$ is the identity matrix of size $m$, and $U$, $V$ are $(0,1)$-matrices such that
$U$ has no zero columns, $V$ has no zero rows, and $V U$ is also  a $(0,1)$-matrix.
Let $u_1^{\rm T}$,  $\ldots$, $u_m^{\rm T}$ be the rows of the matrix 
$\left[ \begin{matrix}
I & U  & 0 & 0 
\end{matrix}  \right]$, and let $v_1$,  $\ldots$, $v_m$ be the columns of the matrix 
$ \left[ \begin{matrix}
I  & 0  & V  &  0  
\end{matrix}  \right]^{\rm T} $.
Then $u_i^{\rm T}  v_i = 1$ for all $i$, $u_i^{\rm T}  v_j = 0$ for all $i \neq j$, and 
$P E P^{\rm T} = \sum_{i=1}^m v_i u_i^{\rm T}$.  
Since  $E_1$, $ \ldots$, $E_k$ and $E = E_1 + \cdots + E_k$ are $(0,1)$-matrices, 
all the ones of a matrix $E_j$ ($j = 1, \ldots, k$) are at positions where also $E$ has ones. Thus, 
we have 
$$ P E_j P^{\rm T} = \left[ \begin{matrix}
I_j & U_j  & 0 & 0 \cr
0 & 0  & 0 & 0 \cr
V_j & (V U)_j & 0 & 0 \cr
0 & 0  & 0 & 0 
\end{matrix}  \right] \  , $$
where $U_j$ is a matrix obtained from $U$ by replacing some ones with zeros, and likewise for $I_j$, $V_j$, and $(V U)_j$.
Now, it follows from $E_j = E_j E = E E_j$ that $U_j = I_j U$ and $V_j = V I_j$, so that each of the first
$m$ rows (resp. columns) of $P E_j P^{\rm T}$ is either equal to 0 or to a corresponding row (resp. column) of $P E P^{\rm T}$. 
Thus,  the matrix $P E_j P^{\rm T}$ is a sum of some of rank-one matrices $v_i u_i^{\rm T}$; for these indices $i$, put $\mu_i = \lambda_j$.  Then we have 
$$  P A P^{\rm T} = \sum_{j=1}^k \lambda_j P E_j P^{\rm T} = \sum_{i=1}^m \mu_i v_i u_i^{\rm T} . $$
\end{proof}

It is worth mentioning that we can eliminate the permutation matrix $P$  in the assertion (d) of Theorem \ref{real} by setting 
$$ \tilde{u}_i = P^{\rm T} u_i \ \ , \ \ \ \tilde{v}_i = P^{\rm T} v_i  \ \ \ \text{for} \ \  i=1, 2, \ldots, m ,  $$
that gives the form
$$  A  = \sum_{i=1}^m \mu_i \tilde{v}_i \tilde{u}_i^{\rm T} . $$
This way, of course, we lose information about the first $m$ coordinates of these vectors. 
As one of the referees noticed, the same form of $A$ can be derived from the theorem in \cite{He},  since an "anchored" submatrix with index set $I \times J$ (introduced there) can be represented by $\mu E$, 
where $E$ is an idempotent $(0,1)$-matrix of the form $v u^{\rm T}$  and $u$, $v$ are  $(0,1)$-vectors associated with the indices in $I$, $J$, respectively.

Finally, we give a simple example showing that the assertion (c) of Theorem \ref{projections} does not imply that, up to a permutation similarity,  $A$ has a block diagonal form with $k$ blocks. 
Given any non-zero real numbers $\alpha$ and $\beta$, define the matrix $A \in M_4(\bR)$ by
$$ A = \left[ \begin{matrix}
\alpha & 0 & 0 & 0 \cr
    0    & \beta  & 0 & \beta \cr
\alpha & \beta & 0 & \beta \cr
0 & 0  & 0 & 0 
\end{matrix}  \right] =  \alpha 
\left[ \begin{matrix}
1\cr
0 \cr
1 \cr
0  
\end{matrix}  \right] \cdot 
\left[ \begin{matrix}
1 & 0  & 0 & 0 
\end{matrix}  \right] + 
\beta 
\left[ \begin{matrix}
0 \cr
1 \cr
1 \cr
0  
\end{matrix}  \right] \cdot 
\left[ \begin{matrix}
0 & 1  & 0 & 1 
\end{matrix}  \right] . $$

\vspace{4mm}
{\bf
\begin{center}
 Acknowledgments
\end{center}
} 
The author was supported in part by the Slovenian Research Agency. He also thanks Bojan Kuzma and the referees for suggestions that improved this paper.

\vspace{2mm}

\baselineskip 6mm
\noindent
Roman Drnov\v sek \\
Department of Mathematics \\
Faculty of Mathematics and Physics \\
University of Ljubljana \\
Jadranska 19 \\
SI-1000 Ljubljana, Slovenia \\
e-mail : roman.drnovsek@fmf.uni-lj.si 

\end{document}